\theoremstyle{plain}
\newtheorem{theorem}{Theorem}
\newtheorem{lem}{Lemma}
\newtheorem{prop}{Proposition}
\theoremstyle{definition}
\newtheorem{exmp}{Example}
\theoremstyle{remark}
\newtheorem{rem}{Remark}
\long\def\symbolfootnote[#1]#2{\begingroup
	\def\thefootnote{\fnsymbol{footnote}}\footnote[#1]{#2}\endgroup}
\begin{document}
	\title[Nonlinear multivalued Duffing systems]
	{Nonlinear multivalued Duffing systems}
	\author{Nikolaos S. Papageorgiou, Calogero Vetro, Francesca Vetro}
	\address[N.S. Papageorgiou]{Department of Mathematics, National Technical University, Zografou campus, 15780, Athens, Greece}
	\email{npapg@math.ntua.gr}
	\address[C. Vetro]{Department of Mathematics and Computer Science, University of Palermo, Via Archirafi 34, 90123, Palermo, Italy}
	\email{calogero.vetro@unipa.it}
	\address[F. Vetro]{$^{(1)}$Nonlinear Analysis Research Group, Ton Duc Thang University, Ho Chi Minh City, Vietnam \\ $^{(2)}$ Faculty of Mathematics and Statistics, Ton Duc Thang University, Ho Chi Minh City, Vietnam}
	\email{francescavetro@tdt.edu.vn}

	\thanks{{\em 2010 Mathematics Subject Classification: } 34A60,  34B15.}

	\keywords{Duffing system, nonlinear differential operator, convex and nonconvex problems, relaxation, continuous and measurable selections, fixed point.}
	
	\maketitle

\begin{abstract}
We consider a multivalued nonlinear Duffing system driven by a nonlinear nonhomogeneous  differential operator. We prove existence theorems for both the convex and nonconvex problems (according to whether the multivalued perturbation is convex valued or not). Also, we show that the solutions of the nonconvex problem are dense in those of the convex (relaxation theorem). Our work extends the recent one by Kalita-Kowalski (JMAA,  https://doi.org/10.1016/j.jmaa. 2018.01.067).
\end{abstract}

\section{Introduction}
In this paper we study the following nonlinear multivalued Duffing system
\begin{equation}
\label{eq1} \begin{cases} - a(u^\prime(t))^\prime -r(t)|u^\prime(t)|^{p-2}u^\prime(t)\in F(t,u(t)) & \mbox{for a.a. } t \in T=[0,b],\\ u(0)=u(b)=0, \quad 1<p<+\infty. &\end{cases}
\end{equation}

In this system $a: \mathbb{R}^N \to \mathbb{R}^N$ is a suitable monotone homeomorphism which incorporates as special cases many differential operators of interest such as the vector $p$-Laplacian which corresponds to the map $a(y)=|y|^{p-2}y$ for all $y \in \mathbb{R}^N$ ($1<p<+\infty$). The term $F(t,x)$ is a multivalued perturbation. We prove existence theorems for both the ``convex problem'' (that is, $F$ is convex valued) and the ``nonconvex problem'' (that is, $F$ has nonconvex values). Finally we show that under more restrictive conditions on the data of the problem, the solutions of the nonconvex problem are dense in those of the convex problem (relaxation theorem).

The starting point of our work here is the recent paper of Kalita-Kowalski \cite{Ref7}, where $N=1$ (scalar problem), $a(y)=y$ for all $y \in \mathbb{R}^N$ (semilinear equation), the growth condition on $F(t,\cdot)$ is more restrictive and the authors treat only the convex problem.

The Duffing equation originates as a model of certain damped and driven oscillators. The equation is well-known for its chaotic behavior, well documented in the works of Holmes \cite{Ref4} and Moon-Holmes \cite{Ref12}. Additional recent results on the scalar, semilinear and single-valued version of the equation, can be found in Galewski \cite{Ref2}, Kowalski \cite{Ref8}, Tomiczek \cite{Ref14}.

\section{Mathematical Background}

The presence of the term $r(t)|u^\prime|^{p-2}u^\prime$ makes problem \eqref{eq1} nonvariational and so our approach is topological based on the fixed point theory. Our tools come from multivalued analysis and nonlinear functional analysis.

Let $X$ be a Banach space. We introduce the following hyperspaces:

\begin{align*}
P_{f(c)}(X)&=\{A \subseteq X :  \mbox{nonempty, closed (and convex)}\},\\
P_{(w)k(c)}(X)&=\{A \subseteq X :  \mbox{nonempty, (weakly-) compact (and convex)}\}.
\end{align*}
A multifunction (set-valued function) $G: X \to 2^X \setminus \{\emptyset\}$ is said to be ``upper semicontinuous (usc)'' (resp. ``lower semicontinuous (lsc)"), if for all $C \subseteq X$ closed, the set $G^-(C)=\{x \in X : G(x) \cap C \neq \emptyset \}$ (resp. $G^+(C)=\{x \in X : G(x)  \subseteq C\}$) is closed.

Given $A,B \in P_f(X)$, we set 
$$h(A,B)= \max\{\sup_{a \in A} d(a,B), \sup_{b \in B} d(b,A)\}. $$

Then $h(\cdot,\cdot)$ is an extended metric on $P_f(X)$ known as the ``Hausdorff metric''.

Now let $(\Omega,\Sigma)$ be a measurable space and $X$ a separable Banach space. A multifunction $F: \Omega \to 2^X \setminus \{\emptyset\}$ is said to be ``graph measurable'' if $ {\rm Gr }F=\{(\omega,x) \in \Omega \times X: x \in F(\omega)\} \in \Sigma \otimes B(X)$ with $B(X)$ being the Borel $\sigma$-field of $X$. Suppose that $\mu(\cdot)$ is a  finite measure on $\Sigma$. By $S^1_F$ we denote the set of $L^1(\Omega,X)$-selectors of $F(\cdot)$, that is, $S_F^1=\{f \in L^1(\Omega,X) : f(\omega) \in F(\omega) \, \, \mu\mbox{-a.e. on }\Omega\}$. Note that as a consequence of the Yankov-von Neumann-Aumann selection theorem (see Hu-Papageorgiou \cite{Ref5}, Theorem 2.14, p. 158), we have that for a graph measurable multifunction $F(\cdot)$, the set $S^1_F$ is nonempty if and only if the function $\omega \to \inf [\|x\| : x \in F(\omega)]$ belongs in $L^1(\Omega)$. The set $S^1_F$ is ``decomposable'', that is
$$\mbox{``if $(A,f_1,f_2) \in \Sigma \times S_F^1 \times S_F^1$, then
$\chi_A f_1 + \chi_{\Omega \setminus A} f_2 \in  S_F^1.$''}$$

Here for any $C \subseteq \Omega$, $\chi_C$ denotes the characteristic function of $C$, that is, $$\chi_C(\omega)=\begin{cases}1 & \mbox{if } \omega \in C\\0 & \mbox{if } \omega \not \in  C\end{cases}.$$

Let $Y,V$ be Banach spaces and $K:Y \to V$. We say that $K(\cdot)$ is ``completely continuous'', if $y_n \xrightarrow{w}y$ in $Y$, then $K(y_n)\to K(y)$ in $V$. A multivalued map $G:Y \to 2^V \setminus \{\emptyset\}$ is said to be ``compact'', if it is usc and maps bounded sets in $Y$ to relatively compact sets in $V$.

We will need the following multivalued generalization of the Leray-Schauder alternative theorem, due to Bader \cite{Ref1}. So, $Y$, $V$ are Banach spaces, $N:Y \to P_{wkc}(V)$ is usc from $Y$ into $V_w$ (= the Banach space $V$ endowed with the weak topology) and $K: V \to Y$ is completely continuous. We set $G=K \circ N$.

\begin{prop}\label{prop1}
If $Y,V,G$ are as above and $G(\cdot)$ is a compact multifunction, then one of the following statements holds: 
\begin{itemize}
\item[$(a)$] the set $S=\{y \in Y: y \in \lambda G(y), \, 0<\lambda<1\}$ is unbounded;
\item[$(b)$] $G(\cdot)$ admits a fixed point (that is, there exists $y \in Y$ such that $y \in G(y)$).
\end{itemize}
\end{prop}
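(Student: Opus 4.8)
The statement is a multivalued version of the Leray--Schauder alternative, so I would argue by the usual dichotomy: assume that $(a)$ fails, i.e.\ that $S$ is bounded, and prove $(b)$. Suppose, towards a contradiction, that in addition $G$ has no fixed point. Since $S$ is bounded, fix $R>0$ with $\|y\|<R$ for all $y\in S$. The plan is to run everything through a fixed-point index for the ``admissible'' class of compact maps of the form $K\circ N$, with $N\colon Y\to P_{wkc}(V)$ usc into $V_w$ and $K\colon V\to Y$ completely continuous, as developed by Bader \cite{Ref1}. I would first record the three properties of this index that the argument needs: normalization (a constant map with value in an open set $U$ has index $1$), the existence property ($i(G,U)\neq 0$ forces a fixed point of $G$ in $U$), and homotopy invariance along admissible compact homotopies that stay fixed-point free on the boundary.

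With these in hand the dichotomy is short. Consider $h\colon[0,1]\times Y\to 2^Y$ given by $h(\lambda,y)=\lambda G(y)=(\lambda K)(N(y))$. For each $\lambda$ the map $\lambda K$ is again completely continuous and $(\lambda,v)\mapsto \lambda K(v)$ is jointly continuous from weak to norm convergence; since $G$ is compact and $[0,1]$ is compact, $h$ is an admissible compact homotopy. I would check that $h$ has no fixed point on $\partial B_R$ for any $\lambda\in[0,1]$: for $\lambda=0$ the only fixed point of $h(0,\cdot)=\{0\}$ is $0\in B_R$; for $0<\lambda<1$ a fixed point would lie in $S$, hence have norm $<R$; and for $\lambda=1$ there is no fixed point by assumption. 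Homotopy invariance then gives $i(G,B_R)=i(\{0\},B_R)=1\neq 0$, and the existence property produces a fixed point of $G$ in $B_R$ --- contradicting the assumption. Hence $(b)$ holds.

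The genuinely hard part is not this bookkeeping but the construction of the index and, above all, its existence property, because the values $G(y)=K(N(y))$ need not be convex and so Kakutani--type theorems cannot be applied to $G$ directly. The device I would use to restore convexity is to pass to the companion map $\Phi=N\circ K\colon V\to P_{wkc}(V)$, whose values \emph{are} convex and weakly compact. A fixed point $v\in\Phi(v)=N(K(v))$ gives, on setting $y=K(v)$, a point with $v\in N(y)$ and $y=K(v)\in K(N(y))=G(y)$, i.e.\ a fixed point of $G$; conversely every fixed point of $G$ yields one of $\Phi$. Thus the existence theory can be carried out for the convex-valued map $\Phi$ via the Bohnenblust--Karlin (Kakutani--Ky Fan) fixed point theorem in $V_w$, using that $K$ sends weakly convergent sequences to norm convergent ones so that the relevant images are norm compact and $\Phi$ is usc with convex weakly compact values. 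I would invoke Bader's index rather than rebuild it, since verifying its homotopy invariance on the admissible class is exactly the technical content of \cite{Ref1}.
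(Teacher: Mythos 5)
The paper offers no proof of Proposition \ref{prop1}: it is quoted directly from Bader \cite{Ref1}, so the only benchmark is the argument of that reference, and your sketch reconstructs it faithfully --- the dichotomy via a fixed-point index with normalization, existence and homotopy invariance; the admissible decomposed homotopy $h(\lambda,y)=(\lambda K)(N(y))$ with the correct boundary checks on $\partial B_R$ (taking $R>\sup_{y\in S}\|y\|$, handling $\lambda=0$, $0<\lambda<1$, and $\lambda=1$ separately); and, crucially, the convexity-restoring passage to $\Phi=N\circ K\colon V\to P_{wkc}(V)$ with the two-way correspondence between fixed points of $\Phi$ and of $G$, which is precisely the device that makes Kakutani-type theorems applicable despite the nonconvex values of $G=K\circ N$. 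Your proposal is therefore correct and takes essentially the same route as the paper, which likewise delegates the technical construction of the index to \cite{Ref1}.
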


Consider the following nonlinear vector eigenvalue problem 
\begin{equation}
\label{eq2} \begin{cases}  -(|u^\prime(t)|^{p-2}u^\prime(t))^\prime =\widehat{\lambda}|u(t)|^{p-2}u(t) & \mbox{for a.a. } t \in T,\\ u(0)=u(b)=0, \quad 1<p<+\infty. &\end{cases}
\end{equation}

We say that $\widehat{\lambda} \in \mathbb{R}$ is an eigenvalue, if problem \eqref{eq2} admits a nontrivial solution $\widehat{u} \in W_0^{1,p}((0,b),\mathbb{R}^N)$, known as an eigenfunction corresponding to $\widehat{\lambda}$. We know that \eqref{eq2} has a sequence of eigenvalues
$$\widehat{\lambda}_n=\left(\frac{n}{b}\right)^p(p-1)\left[2\int_0^1\frac{dt}{(1-t^p)^{1/p}}\right]^p \quad \mbox{for all } n \in \mathbb{N}$$
and the corresponding eigenfunctions are
$$\widehat{u}_n(t)=a u_n(t)\quad \mbox{for all } n \in \mathbb{N}$$
with $a \in \mathbb{R}^N$ and $u_n$ being the corresponding scalar Dirichlet eigenfunctions. Therefore $|\widehat{u}_n(t)| \neq 0$ for a.a. $t \in T$. Also, we have
\begin{equation}
\label{eq3}\widehat{\lambda}_1 = \inf \left[\frac{\|u^\prime\|_p^p}{\|u\|_p^p}: u \in W_0^{1,p}((0,b),\mathbb{R}^N), \, u \neq 0\right]
\end{equation}
(see Gasi\'nski-Papageorgiou \cite{Ref3}, p. 768).

If by $\|\cdot\|$ we denote the norm of the Sobolev space $W_0^{1,p}((0,b),\mathbb{R}^N)$, then from the Poincar\'e inequality, we have
$$\|u\|=\|u^\prime\|_p \quad \mbox{for all } u \in W_0^{1,p}((0,b),\mathbb{R}^N).$$

For notational economy in the sequel we will write
\begin{align*}
C_0 &=C_0(T, \mathbb{R}^N)=\{u \in C(T, \mathbb{R}^N): u(0)=u(b)=0\},\\
C^1_0 &=C^1_0(T, \mathbb{R}^N)= C^1(T, \mathbb{R}^N)\cap C_0(T, \mathbb{R}^N),\\
 W_0^{1,p}&=  W_0^{1,p}((0,b),\mathbb{R}^N),
 \\
 W^{1,p}&=  W^{1,p}((0,b),\mathbb{R}^N),\\
 L_N^r&=L^r(T, \mathbb{R}^N) \quad \mbox{for all }1 \leq r \leq +\infty.
\end{align*}

\section{Convex Problem}\label{S3}
In this section we deal with the ``convex problem'' (that is, $F$ is convex valued). The precise hypotheses on the data of the convex problem are the following:

\begin{itemize}
\item[$H(a)$:] $a: \mathbb{R}^N \to \mathbb{R}^N$ is a continuous and strictly monotone map such that 
$$c_0|y|^p \leq (a(y),y)_{\mathbb{R}^N} \quad \mbox{for all $y \in \mathbb{R}^N$, some  $c_0>0$}.$$
\end{itemize}

\begin{rem} Note that $a(\cdot)$ is maximal monotone, surjective (see Gasi\'nski-Papageorgiou \cite{Ref3} (pp. 309, 319)). Moreover $|a^{-1}(y)| \to +\infty$ as $|y| \to +\infty$. We stress that no growth restriction is imposed. Such very general operators were first used by Man\'asevich-Mawhin \cite{Ref10,Ref11}. Later Kyritsi-Matzakos-Papageorgiou \cite{Ref9} used them in the context of multivalued systems with unilateral constraints.
\end{rem}

\begin{exmp} The following maps $a: \mathbb{R}^N \to \mathbb{R}^N$ satisfy hypotheses $H(a)$:
\begin{align*}
a(y)&=|y|^{p-2}y \quad \mbox{for all $y \in \mathbb{R}^N$, with  $1 < p < +\infty$}\\
&\hskip .5cm \mbox{(this map corresponds to the vector $p$-Laplacian)},\\
a(y)&=|y|^{p-2}y +|y|^{q-2}y\quad \mbox{for all $y \in \mathbb{R}^N$, with $1 < q<p < +\infty$}\\
&\hskip .5cm \mbox{(this map corresponds to the vector $(p,q)$-Laplacian)},\\
a(y)&=(1+|y|^{2})^\frac{p-2}{2}y \quad \mbox{for all $y \in \mathbb{R}^N$, with $1 < p < +\infty$},\\
a(y)&=|y|^{p-2}y (ce^{|y|^p}-1)\quad \mbox{for all $y \in \mathbb{R}^N$, with $1 < p < +\infty$, $c>1$}.
\end{align*}
\end{exmp}

\begin{itemize}
	\item[$H(r)$:]$r \in L^\infty(T)$.
\end{itemize}

\begin{itemize}
\item[$H(F)_1$:] $F:T  \times \mathbb{R}^N \to P_{kc}(\mathbb{R}^N)$ is a multifunction such that
\begin{enumerate}
\item[(i)] for all $x \in \mathbb{R}^N$, the multifunction $t \to F(t,x)$ admits a measurable selection;
\item[(ii)] for a.a. $t \in T$,  $ {\rm Gr }F(t,\cdot) \subseteq \mathbb{R}^N \times \mathbb{R}^N$ is closed;
\item[(iii)] if $\xi =c_0 -\dfrac{\|r\|_\infty}{\widehat{\lambda}_1^{1/p}}>0$, then there exists a function $\theta \in L^\infty(T)_+$ such that $\theta(t) \leq \widehat{\lambda}_1 \xi$ for a.a. $t \in T$, the inequality is strict on a set of positive measure, 
$$\limsup_{|x|\to+\infty}\frac{\sup[(h,x)_{\mathbb{R}^N}: h \in F(t,x)]}{|x|^p}\leq \theta(t) \quad \mbox{uniformly for a.a. $t \in T$}$$
and for every $\eta>0$, there exists $a_\eta \in L^1(T)_+$ such that
$$|F(t,x)| =\sup [|h|:h\in F(t,x)] \leq a_\eta (t) \quad \mbox{for a.a. $t \in T$, all $|x| \leq \eta$}.$$
\end{enumerate}\end{itemize}

\begin{rem}
Hypothesis $H(F)_1$(i) is satisfied if, for example, for all $x \in \mathbb{R}^N$ the multifunction $t \to F(t,x)$ is graph measurable (that is, ${\rm Gr}F(\cdot,x) \in \mathcal{L}_T \otimes B(\mathbb{R}^N)$ for all $x \in \mathbb{R}^N$ with $\mathcal{L}_T$ being the lebesgue $\sigma$-field of $T$ and $ B(\mathbb{R}^N)$ is the Borel $\sigma$-field of  $\mathbb{R}^N$).
\end{rem}

Let $\psi:W_0^{1,p} \to \mathbb{R}$ be the $C^1$-functional defined by
$$\psi(u)=\xi\|u^\prime\|_p^p -\int_0^b \theta(t)|u|^p dt \quad \mbox{for all }u \in W_0^{1,p} .$$

We have that $\psi(u) \geq0$ for all $u \in W_0^{1,p}$. To see this, let $u \in W_0^{1,p}$. Then
\begin{align*}
\psi(u)&= \xi \|u^\prime\|_p^p -\int_0^b \theta(t)|u|^pdt \\ & \geq \xi \widehat{\lambda}_1\|u\|_p^p-\int_0^b\theta(t)|u|^pdt \quad \mbox{(see \eqref{eq3})}\\ & = \int_0^b[\xi \widehat{\lambda}_1-\theta(t)]|u|^p dt \geq 0\quad \mbox{(see hypothesis $H(F)_1$(iii))}.
\end{align*}

\begin{lem}\label{lem2}
	There exists $c_1>0$ such that $c_1 \|u\|^p \leq \psi(u)$ for all $u \in W_0^{1,p}$.
\end{lem}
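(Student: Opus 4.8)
The plan is to argue by contradiction, exploiting weak compactness of the reflexive space $W_0^{1,p}$ together with the compact Sobolev embedding into $C_0$. Suppose no such constant exists. Then for every $n \in \mathbb{N}$ I can find $u_n \in W_0^{1,p}$ with $\psi(u_n) < \frac{1}{n}\|u_n\|^p$; normalizing and using the $p$-homogeneity of $\psi$, I may assume $\|u_n\|=1$ for all $n$, so that $\psi(u_n) \to 0^+$ while $\|u_n'\|_p = 1$ by the Poincar\'e inequality. The sequence $(u_n)$ is bounded in $W_0^{1,p}$, hence, passing to a subsequence, $u_n \xrightarrow{w} u$ in $W_0^{1,p}$, and by the compact embedding $W_0^{1,p} \hookrightarrow C_0$ (one dimension) I get $u_n \to u$ in $C_0$, so in particular $u_n \to u$ in $L_N^p$.

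Next I pass to the limit in $\psi(u_n) = \xi\|u_n'\|_p^p - \int_0^b \theta(t)|u_n|^p\,dt$. Since $\theta \in L^\infty(T)_+$ and $u_n \to u$ in $L_N^p$, the integral term converges to $\int_0^b \theta(t)|u|^p\,dt$, and since $\|u_n'\|_p^p = 1$, the identity $\psi(u_n) = \xi - \int_0^b \theta(t)|u_n|^p\,dt \to 0$ forces $\int_0^b \theta(t)|u|^p\,dt = \xi > 0$; in particular $u \neq 0$. On the other hand, weak lower semicontinuity of the $L_N^p$-norm gives $\|u'\|_p^p \leq \liminf_n \|u_n'\|_p^p = 1$, whence $\psi(u) = \xi\|u'\|_p^p - \xi \leq 0$. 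Combined with the already established nonnegativity $\psi(u) \geq 0$, this yields $\psi(u)=0$ and $\|u'\|_p^p = 1$.

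The decisive step is the analysis of this equality case. From $\psi(u)=0$ and hypothesis $H(F)_1$(iii) I have
$$\xi\|u'\|_p^p = \int_0^b \theta(t)|u|^p\,dt \leq \widehat{\lambda}_1\xi\|u\|_p^p,$$
so $\|u'\|_p^p \leq \widehat{\lambda}_1\|u\|_p^p$, while \eqref{eq3} gives the reverse inequality. Hence $\|u'\|_p^p = \widehat{\lambda}_1\|u\|_p^p$, so $u \neq 0$ realizes the infimum in \eqref{eq3} and is therefore an eigenfunction corresponding to $\widehat{\lambda}_1$; consequently $|u(t)| \neq 0$ for a.a. $t \in T$. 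At the same time, equality in the displayed estimate means $\int_0^b [\widehat{\lambda}_1\xi - \theta(t)]|u|^p\,dt = 0$, where the integrand is nonnegative. Since $\widehat{\lambda}_1\xi - \theta(t) > 0$ on a set of positive measure by $H(F)_1$(iii) and $|u(t)| \neq 0$ a.e., this integrand is strictly positive on a set of positive measure, so the integral must be strictly positive -- a contradiction.

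The main obstacle, and the only nonroutine point, is precisely this last step: the bare bound $\theta \leq \widehat{\lambda}_1\xi$ only yields $\psi \geq 0$, so the strict coercivity constant $c_1$ cannot be produced by a direct estimate. It must be extracted by the compactness argument above, whose crux is identifying the weak limit as a first eigenfunction and then using its a.e.\ nonvanishing to contradict the strictness of the inequality on a positive-measure set.
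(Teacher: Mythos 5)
Your proof is correct and follows essentially the same route as the paper: an indirect argument with a normalized sequence $\|u_n\|=1$, $\psi(u_n)\to 0$, passage to a weak limit via the compact embedding $W_0^{1,p}\hookrightarrow C_0$, identification of $u$ as a first eigenfunction from the equality $\|u'\|_p^p=\widehat{\lambda}_1\|u\|_p^p$, and the final contradiction from $\theta<\widehat{\lambda}_1\xi$ on a set of positive measure combined with $|u(t)|\neq 0$ a.e. Your only (harmless) variation is ruling out $u=0$ directly from $\int_0^b\theta(t)|u|^p\,dt=\xi>0$, where the paper instead treats the case $\eta=0$ in $u=\eta\widehat{u}$ and derives $\|u_n'\|_p\to 0$, contradicting the normalization.
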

\begin{proof}
	We argue indirectly. So, suppose that the lemma is not true. Exploiting the $p$-homogeneity of $\psi(\cdot)$, we can find $\{u_n\}_{n \geq 1} \subseteq W^{1,p}_0$ such that
\begin{equation}
\label{eq4} \|u_n\|=1 \mbox{ and } 0 \leq \psi(u_n)<\frac{1}{n} \quad \mbox{for all $n \in \mathbb{N}$}.
\end{equation}	

We may assume that
$$u_n \xrightarrow{w} u \mbox{ in } W_0^{1,p}.$$

Also note that $\psi(\cdot)$ is sequentially weakly lower semicontinuous (recall that $
W_0^{1,p} \hookrightarrow C_0$ compactly). So, in the limit as $n \to +\infty$, we have 
\begin{align}
\nonumber \psi(u)&=0,\\ \label{eq5}\Rightarrow \quad & \xi \|u^\prime\|_p^p=\int_0^b \theta(t)|u|^p dt \leq \widehat{\lambda}_1
\xi \|u\|_p^p,\\ \nonumber \Rightarrow \quad & \|u^\prime\|_p^p \leq \widehat{\lambda}_1
  \|u\|_p^p, \\ \nonumber \Rightarrow \quad & \|u^\prime\|_p^p = \widehat{\lambda}_1
  \|u\|_p^p \quad \mbox{(see \eqref{eq3})},\\ \nonumber \Rightarrow \quad & u=\eta \widehat{u} \quad \mbox{for some } \eta \in \mathbb{R}.\end{align} 
  
If $\eta=0$, then $u=0$ and so from \eqref{eq4} we have $\|u_n^\prime\|_p \to 0,$ which contradicts the fact that $\|u_n\|=1$ for all $n \in \mathbb{N}$. So, $\eta \neq 0$ and it follows that $|u(t)| \neq 0$ for a.a. $t \in T$. From \eqref{eq5} and the hypothesis on $\theta(\cdot)$ we have $$\|u^\prime\|_p^p < \widehat{\lambda}_1 \|u\|_p^p,$$ a contradiction to \eqref{eq3}. 
\end{proof}

Let $h \in L^1_N$ and consider the following auxiliary Dirichlet problem
\begin{equation}
\label{eq6} -a(u^\prime(t))^\prime =h(t) \quad \mbox{for a.a. $t \in T$, $u(0)=u(b)=0$.}
\end{equation}

By Lemma 4.1 of Man\'asevich-Mawhin \cite{Ref11}, we know that problem \eqref{eq6} admits a unique solution $u =K(h) \in C_0^1$. So, we can define the solution map $K: L^1_N \to C_0^1$.

\begin{prop}\label{prop3}
	The solution map $K: L^1_N \to C_0^1$ is completely continuous.
\end{prop}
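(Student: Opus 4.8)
The plan is to verify the definition of complete continuity directly. Let $h_n \xrightarrow{w} h$ in $L^1_N$ and write $u_n = K(h_n)$, $u = K(h)$; I want to show $u_n \to u$ in $C_0^1$. Since a weakly convergent sequence in $L^1_N$ is bounded, say $\|h_n\|_1 \leq M$, and is moreover uniformly integrable by the Dunford--Pettis theorem, these two facts will drive both the a priori bounds and the limit passage.

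First I would integrate the equation. From $-(a(u_n'(t)))' = h_n(t)$ I obtain the representation $a(u_n'(t)) = c_n - \int_0^t h_n(s)\,ds$ for some $c_n \in \mathbb{R}^N$, hence $u_n'(t) = a^{-1}\big(c_n - \int_0^t h_n(s)\,ds\big)$, where $c_n$ is fixed by the constraint $\int_0^b u_n'(t)\,dt = u_n(b) - u_n(0) = 0$. To get the basic bound I would test the equation with $u_n$ and integrate by parts; using $H(a)$ and the compact embedding $W_0^{1,p} \hookrightarrow C_0$ this yields $c_0\|u_n'\|_p^p \leq \|h_n\|_1\|u_n\|_\infty \leq cM\|u_n'\|_p$, so that $\{u_n\}$ is bounded in $W_0^{1,p}$.

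Next I would upgrade this to a uniform $C^1$ bound. Setting $v_n(t) = \int_0^t h_n(s)\,ds$, we have $|v_n(t)| \leq M$ for all $t \in T$. I claim $\{c_n\}$ is bounded: if $|c_n| \to +\infty$ along a subsequence, then $|c_n - v_n(t)| \geq |c_n| - M \to +\infty$ uniformly in $t$, and since $|a^{-1}(y)| \to +\infty$ as $|y| \to +\infty$ (see the remark after $H(a)$) this forces $|u_n'(t)| \to +\infty$ uniformly, contradicting the $L^p$ bound just obtained. With $\{c_n\}$ bounded and $|v_n(t)| \leq M$, the continuity of $a^{-1}$ gives a uniform bound on $u_n'$, so $\{u_n\}$ is bounded in $C_0^1$.

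Finally I would pass to the limit. Testing the weak convergence against $\chi_{[0,t]}$ shows $v_n(t) \to v(t) := \int_0^t h(s)\,ds$ pointwise, while uniform integrability makes $\{v_n\}$ equicontinuous; by Arzel\`a--Ascoli $v_n \to v$ uniformly on $T$. Passing to a subsequence with $c_n \to \bar c$, continuity of $a^{-1}$ gives $u_n' \to a^{-1}(\bar c - v(\cdot))$ uniformly, hence $u_n \to \tilde u$ in $C_0^1$, where $\tilde u(t) = \int_0^t a^{-1}(\bar c - v(s))\,ds$. The uniform limit satisfies the boundary conditions and $-(a(\tilde u'(t)))' = h(t)$, so by the uniqueness in Man\'asevich--Mawhin \cite{Ref11} we get $\tilde u = K(h) = u$. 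Since the limit is independent of the subsequence, the whole sequence converges, $u_n \to u$ in $C_0^1$. I expect the main obstacle to be the uniform $C^1$ control, specifically bounding the integration constants $c_n$ and promoting the pointwise convergence of $v_n$ to uniform convergence; both rest on the soft consequences of weak $L^1$ convergence (boundedness and Dunford--Pettis uniform integrability) together with the coercivity and continuity of $a^{-1}$, while the identification of the limit relies essentially on the uniqueness of solutions to \eqref{eq6}.
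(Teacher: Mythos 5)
Your proof is correct, and its skeleton parallels the paper's: the a priori $W_0^{1,p}$ bound from testing with $u_n$, the integrated representation $a(u_n^\prime(t))=a(u_n^\prime(0))-\int_0^t h_n(s)\,ds$, Arzel\`a--Ascoli plus continuity of $a^{-1}$, and identification of the limit via uniqueness. You diverge at two genuine points. First, where the paper bounds the constants $c_n=a(u_n^\prime(0))$ by citing Proposition 3.1 of Man\'asevich--Mawhin \cite{Ref11}, applied to the constraint $0=\int_0^b a^{-1}\left[c_n-\int_0^t h_n\,ds\right]dt$, you argue directly: if $|c_n|\to+\infty$ along a subsequence then $|c_n-v_n(t)|\geq |c_n|-M\to+\infty$ uniformly, and the coercivity $|a^{-1}(y)|\to+\infty$ (the paper's remark after $H(a)$) forces $|u_n^\prime|\to+\infty$ uniformly, contradicting the $L^p$ bound. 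This is more elementary and self-contained, at the cost of making the $W_0^{1,p}$ estimate load-bearing for this step, which the cited proposition does not require. Second, for the limit passage the paper applies Arzel\`a--Ascoli to $\{a(u_n^\prime(\cdot))\}$, extracts $u_n\to u$ in $C_0^1$, and identifies $u=K(h)$ by passing to the limit in the weak formulation $\int_0^b (a(u_n^\prime),v^\prime)_{\mathbb{R}^N}dt=\int_0^b (h_n,v)_{\mathbb{R}^N}dt$; you instead establish full-sequence uniform convergence of the primitives $v_n\to v$ (pointwise from weak convergence tested against $\chi_{[0,t]}$, upgraded via Dunford--Pettis equicontinuity), pass to a subsequence in $c_n$, and read the limit equation off the explicit formula $\tilde u^\prime=a^{-1}(\bar c-v(\cdot))$ together with uniqueness of solutions of \eqref{eq6}. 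Your route makes explicit two things the paper glosses over: the equicontinuity underlying its own Arzel\`a--Ascoli step (which likewise rests on Dunford--Pettis uniform integrability of $\{h_n\}$) and the subsequence principle behind ``we may assume $u_n\to u$ in $C_0^1$''. Incidentally, your sign in the representation is the correct one; the paper's \eqref{eq9} carries a harmless sign slip ($+\int_0^t h_n\,ds$ rather than $-\int_0^t h_n\,ds$) that affects neither argument.
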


\begin{proof}
	Suppose that $h_n \xrightarrow{w}h$ in $L^1_N$ and let $u_n =K(h_n)$, $n \in \mathbb{N}$. We have
	\begin{equation}
	\label{eq7} -a(u_n^\prime(t))^\prime =h_n(t) \quad \mbox{for a.a. $t \in T$, $u_n(0)=u_n(b)=0$, $n \in \mathbb{N}$.}
	\end{equation}
	
Taking inner product with $u_n(t)$, integrating over $T=[0,b]$ and performing integration by parts, we obtain 
\begin{align}
\nonumber & \int_0^b  (a(u_n^\prime),u_n^\prime)_{\mathbb{R}^N}dt= \int_0^b  (h_n,u_n)_{\mathbb{R}^N}dt\\ \nonumber \Rightarrow \quad & c_0\|u_n^\prime\|_p^p \leq \|h_n\|_1 \|u_n\|_\infty \leq c_2 \|u_n^\prime\| \quad \mbox{for some $c_2>0$, all $n \in \mathbb{N}$}\\ \nonumber & \hskip 6cm \mbox{(see hypothesis $H(a)$),}\\ \nonumber \Rightarrow \quad & \{u_n\}_{n \geq 1} \subseteq W_0^{1,p} \mbox{ is bounded,}\\\Rightarrow \quad & \{u_n\}_{n \geq 1} \subseteq C_0  \mbox{ is relatively compact} \label{eq8}\\ \nonumber & \hskip 4cm \mbox{(recall that $W_0^{1,p} \hookrightarrow C_0$ compactly).}
\end{align}	

From \eqref{eq7} we have
\begin{align}
\label{eq9} & a(u_n^\prime(t))=a(u_n^\prime(0))+\int_0^t
h_n(s)ds \quad \mbox{for all $t \in T$, all $n \in \mathbb{N}$,}\\ \nonumber
\Rightarrow \quad & u_n^\prime(t)= a^{-1}\left[ a(u_n^\prime(0))+\int_0^t
h_n(s)ds \right],\\ \nonumber
\Rightarrow \quad & 0=\int_0^b a^{-1}\left[ a(u_n^\prime(0))+\int_0^t
h_n(s)ds \right]dt\quad \mbox{for all $n \in \mathbb{N}$.}\end{align}	

Invoking Proposition 3.1 of Man\'asevich-Mawhin \cite{Ref11}, we infer that
\begin{equation}
\label{eq10} \{a(u_n^\prime(0))\}_{n\geq 1}\subseteq \mathbb{R}^N \mbox{ is bounded}.
\end{equation}

From \eqref{eq9}, \eqref{eq10} and the Arzel\`{a}-Ascoli theorem it follows that 
\begin{equation}
\label{eq11} \{a(u_n^\prime(\cdot))\}_{n\geq 1}\subseteq C(T,\mathbb{R}^N) \mbox{ is relatively compact}.
\end{equation}

Let $\widehat{a}^{-1}: C(T,\mathbb{R}^N) \to C(T,\mathbb{R}^N)$ be defined by
$$\widehat{a}^{-1}(u)(\cdot)=a^{-1}(u(\cdot)) \quad \mbox{for all } u \in C(T,\mathbb{R}^N).$$

Evidently this map is continuous and maps bounded sets to bounded sets. So, from \eqref{eq11} it follows that
\begin{equation}
\label{eq12} \{u_n^\prime\}_{n\geq 1}\subseteq C(T,\mathbb{R}^N) \mbox{ is relatively compact}.
\end{equation}
From \eqref{eq8} and \eqref{eq12} it follows that  
$$\{u_n\}_{n\geq 1}\subseteq C_0^1 \mbox{ is relatively compact}.$$

We may assume that
\begin{equation}
\label{eq13} u_n \to u \mbox{ in } C_0^1.
\end{equation}

From \eqref{eq7} we have
\begin{align*}
 & \int_0^b  (a(u_n^\prime),v^\prime)_{\mathbb{R}^N}dt= \int_0^b  (h_n,v)_{\mathbb{R}^N}dt \quad \mbox{for all $v \in W^{1,p}_0$, all $n \in \mathbb{N}$},\\ \nonumber \Rightarrow \quad &\int_0^b  (a(u^\prime),v^\prime)_{\mathbb{R}^N}dt= \int_0^b  (h,v)_{\mathbb{R}^N}dt \quad \mbox{for all $v \in W^{1,p}_0$ (see \eqref{eq13})},\\ \nonumber \Rightarrow \quad & u=K(h).
\end{align*}

Therefore $K(h_n) \to K(h)$ in $C_0^1$ and so the solution map $K(\cdot)$ is completely continuous.
\end{proof}

Let $N_1: C_0^1 \to 2^{L_N^1}$ be defined by $N_1(u)=S^1_{F(\cdot,u(\cdot))}$ for all $u \in C^1_0$.

\begin{prop}\label{prop4}
If hypotheses $H(r)$, $H(F)_1$ hold, then the multifunction $N_1(\cdot)$ is $P_{wkc}(L^1_N)$-valued and it is usc from $C_0^1$ with the norm topology into $L_N^1$ with the weak topology.
\end{prop}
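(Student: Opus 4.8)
The plan is to establish the two assertions separately: first that each value $N_1(u)$ lies in $P_{wkc}(L^1_N)$, and then that $N_1$ is upper semicontinuous from $C_0^1$ (norm topology) into $L^1_N$ endowed with the weak topology.

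For the first assertion, fix $u \in C_0^1$ and set $\eta = \|u\|_\infty$. I would first check that the multifunction $t \mapsto F(t,u(t))$ is graph measurable: hypothesis $H(F)_1$(i) supplies a measurable selection at each fixed point of $\mathbb{R}^N$, while $H(F)_1$(ii) gives closedness of $\mathrm{Gr}\,F(t,\cdot)$, and combining these (approximating the continuous $u$ by step functions) yields measurability of $t \mapsto F(t,u(t))$. The growth bound in $H(F)_1$(iii) then gives $|F(t,u(t))| \le a_\eta(t)$ a.e. with $a_\eta \in L^1(T)_+$, so the function $t \mapsto \inf[|x| : x \in F(t,u(t))]$ is integrable; by the Yankov--von Neumann--Aumann selection theorem recalled above, $N_1(u) = S^1_{F(\cdot,u(\cdot))} \ne \emptyset$. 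Convexity of $N_1(u)$ is immediate from the convexity of the values $F(t,x) \in P_{kc}(\mathbb{R}^N)$. For weak compactness, I would note that every $f \in N_1(u)$ satisfies $|f(t)| \le a_\eta(t)$ a.e., so $N_1(u)$ is uniformly integrable and hence relatively weakly compact by the Dunford--Pettis theorem; it remains to verify that $N_1(u)$ is weakly (equivalently, since it is convex, strongly) closed, which follows from the closedness and decomposability of selector sets. Thus $N_1(u) \in P_{wkc}(L^1_N)$.

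For the second assertion, since $C_0^1$ is metrizable, $L^1_N$ is separable (so its weakly compact subsets are weakly metrizable), and $N_1$ carries bounded sets into relatively weakly compact subsets of $L^1_N$ (again via the bound $a_\eta$), upper semicontinuity from the norm topology into $(L^1_N)_w$ is equivalent to the sequential closed graph property. So I would take $u_n \to u$ in $C_0^1$ and $f_n \in N_1(u_n)$, and show that any weak limit $f$ of $\{f_n\}$ belongs to $N_1(u)$. Passing to a subsequence, $f_n \xrightarrow{w} f$ in $L^1_N$; the remaining task is to prove $f(t) \in F(t,u(t))$ for a.a. $t \in T$.

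The heart of the argument, and the main obstacle, is this pointwise identification of the weak limit. Here I would invoke Mazur's lemma to produce convex combinations $g_n = \sum_{k \ge n} \lambda_k^n f_k$ with $g_n \to f$ strongly in $L^1_N$, hence $g_n(t) \to f(t)$ a.e. along a further subsequence. Combined with $u_n(t) \to u(t)$, the closed-graph hypothesis $H(F)_1$(ii), and the convexity of the values, a standard convergence theorem for multifunctions (of Aubin--Cellina type) yields $f(t) \in \bigcap_{n} \overline{\mathrm{conv}}\,\bigcup_{k \ge n} F(t,u_k(t)) \subseteq F(t,u(t))$ for a.a. $t$. The delicate point is that passing from weak convergence in $L^1_N$ to a.e. pointwise information about the limit requires the uniform $L^1$-bound $a_\eta$ (to control the convex combinations) together with the upper semicontinuity of $F(t,\cdot)$ extracted from its closed graph and local boundedness. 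Once $f \in S^1_{F(\cdot,u(\cdot))} = N_1(u)$ is established, the sequential closed graph criterion delivers the desired upper semicontinuity, completing the proof.
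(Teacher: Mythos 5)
Your second half is essentially the paper's argument: the paper also reduces upper semicontinuity to sequential closedness of ${\rm Gr}\,N_1$ in $C_0^1\times(L^1_N,w)$ (citing Proposition 2.23, p.~43, of Hu--Papageorgiou, which is where your local weak compactness and metrizability remarks enter), uses the uniform bound $|f_n(t)|\le a_\eta(t)$ with $\eta=\sup_n\|u_n\|_\infty$ to apply Dunford--Pettis, and identifies the weak limit pointwise via $f(t)\in\overline{{\rm conv}}\limsup_n\{f_n(t)\}\subseteq\overline{{\rm conv}}\limsup_n F(t,u_n(t))\subseteq F(t,u(t))$; the convergence theorem the paper cites (Proposition 3.9, p.~694, of Hu--Papageorgiou) is exactly the Mazur convex-combination argument you sketch. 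So on upper semicontinuity you and the paper agree.

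There is, however, a genuine gap in your nonemptiness step. You claim that $H(F)_1$(i)--(ii), combined with step-function approximation, yield graph measurability of $t\mapsto F(t,u(t))$, and you then invoke the Yankov--von Neumann--Aumann theorem. But $H(F)_1$(i) asserts only that for each \emph{fixed} $x$ the multifunction $t\mapsto F(t,x)$ \emph{admits a measurable selection}; it does not assert that $t\mapsto F(t,x)$ is measurable or graph measurable (the paper's remark after $H(F)_1$ explicitly presents graph measurability of $F(\cdot,x)$ as a strictly stronger sufficient condition for (i)). Consequently, even for a simple function $s=\sum_i x_i\chi_{A_i}$ the composed multifunction $t\mapsto F(t,s(t))$, which equals $F(t,x_i)$ on $A_i$, need not be graph measurable, and no limiting argument can then produce graph measurability of $t\mapsto F(t,u(t))$; the selection theorem is simply not applicable along your route. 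What the hypothesis \emph{does} give is a measurable selection for simple functions by patching: choose measurable $g_i(t)\in F(t,x_i)$ and set $f(t)=\sum_i g_i(t)\chi_{A_i}(t)\in F(t,s(t))$. This is precisely how the paper proceeds: it takes simple functions $s_n\to u(t)$ a.e.\ with $|s_n(t)|\le|u(t)|$, obtains selections $f_n$ of $F(\cdot,s_n(\cdot))$ dominated by $a_\eta$ with $\eta=\|u\|_\infty$, extracts $f_n\xrightarrow{w}f$ in $L^1_N$ by Dunford--Pettis, and concludes $f(t)\in F(t,u(t))$ a.e.\ by the same $\overline{{\rm conv}}\limsup$ theorem, thereby constructing an element of $S^1_{F(\cdot,u(\cdot))}$ directly and bypassing measurability of the composition altogether. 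In short, the Mazur-type machinery you deploy in the upper semicontinuity step is also the correct tool for nonemptiness, whereas the measurability claim you rest it on is false under the stated hypotheses.
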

\begin{proof}
	First we show that $N_1$ has values in $P_{wkc}(L^1_N)$. Clearly, hypothesis $H(F)_1$(iii) implies that for every $u \in C^1_0$, the set $N_1(u)$ is w-compact, convex. We only need to show that the set $S^1_{F(\cdot,u(\cdot))}$ is nonempty. To this end, let $\{s_n\}_{n\geq 1}$ be simple functions such that
	$$|s_n(t)| \leq |u(t)| \mbox{ and } s_n(t) \to u(t) \quad \mbox{for a.a. } t \in T.$$
	
	On account of hypothesis $H(F)_1$(i), the multifunction $t \to F(t,s_n(t))$ admits a measurable selection $f_n:T \to \mathbb{R}^N$. If $\eta=\|u\|_\infty$, then by hypothesis $H(F)_1$(iii), we have 
	$$|f_n(t)| \leq a_\eta(t) \quad \mbox{for a.a. $t \in T$, all $n \in \mathbb{N}$.}$$
	
By the Dunford-Pettis theorem, we may assume that
$$f_n \xrightarrow{w}f \mbox{ in } L^1_N.$$

Invoking Proposition 3.9, p. 694, of Hu-Papageorgiou \cite{Ref5}, we have
\begin{align*}
f(t) & \in \overline{{\rm conv}}\limsup \{f_n(t)\}_{n \geq 1}\\ & \subseteq \overline{{\rm conv}}\limsup_{n \to +\infty}F(t,s_n(t))\\ & \subseteq  F(t,u(t)) \quad \mbox{for a.a. $t \in T$ (see hypothesis $H(F)_1$(ii))},\\ \Rightarrow \quad & f \in S^1_{F(\cdot,u(\cdot))}.
\end{align*}
	
	Therefore we conclude that $$N_1(u) \in P_{wkc}(L^1_N) \quad \mbox{for all }u \in C^1_0.$$
	
To show the claimed by the proposition upper semicontinuity, according to Proposition 2.23, p. 43, of Hu-Papageorgiou \cite{Ref5}, it suffices to show that $ {\rm Gr }N_1=\{(u,f) \in C_0^1 \times L_N^1: f \in N_1(u)\}$ is sequentially closed in $C_0^1 \times (L_N^1,w)$. So, consider a sequence $\{(u_n,f_n)\}_{n\geq 1} \subseteq {\rm Gr }N_1$ and assume that
\begin{equation}
\label{eq14}u_n \to u \mbox{ in } C^1_0 \mbox{ and } f_n \xrightarrow{w}f \mbox{ in }L^1_N.
\end{equation}	

Let $\eta=\sup_{n \geq 1} \|u_n\|_\infty < +\infty$ (see \eqref{eq14}). Then by hypothesis $H(F)_1$(iii) we have
	$$|f_n(t)| \leq a_\eta(t) \quad \mbox{for a.a. $t \in T$, all $n \in \mathbb{N}$, with $a_\eta \in L^1(T)_+$.}$$
	
So, at least for a subsequence, we have 	$$ f_n \xrightarrow{w}f \mbox{ in }L^1_N.$$

As before using Proposition 3.9, p. 694, of Hu-Papageorgiou \cite{Ref5} we obtain $f \in S^1_{F(\cdot,u(\cdot))}$. Hence $(u,f) \in {\rm Gr}N_1$ and so we have the desired upper semicontinuity of $N_1(\cdot)$.
\end{proof}

Clearly the map $u \to r(\cdot)|u^\prime(\cdot)|^{p-2}u^\prime(\cdot)$ is continuous from $C_0^1$ into $L_N^1$. Therefore the multifunction $N:C_0^1 \to P_{wkc}(L^1_N)$ defined by $$N(u)=N_1(u)+r(\cdot)|u^\prime(\cdot)|^{p-2}u^\prime(\cdot) \quad \mbox{for all } u \in C_0^1,$$ is usc from $C_0^1$ into $(L_N^1,w)$.

Then Propositions \ref{prop3} and \ref{prop4} imply that $u \to K \circ N(u)$ is compact from $C_0^1$ into itself. We introduce the set
$$S=\{u \in C_0^1 : u=\lambda K N(u), \, 0< \lambda<1\}.$$

\begin{prop}
	\label{prop5} If hypotheses $H(a)$, $H(r)$, $H(F)_1$ hold, then $S \subseteq C_0^1$ is bounded.
\end{prop}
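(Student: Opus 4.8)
The plan is to run the a priori estimate through the coercive functional $\psi$ of Lemma \ref{lem2}, after converting the fixed-point relation into a weak differential equation. Fix $u \in S$, so $u = \lambda KN(u)$ for some $0<\lambda<1$. Unravelling $N$, there is $f \in S^1_{F(\cdot,u(\cdot))}$ such that, with $g = f + r(\cdot)|u'(\cdot)|^{p-2}u'(\cdot)$, the function $w = K(g) = \lambda^{-1}u$ solves $-a(w')' = g$ weakly with $w(0)=w(b)=0$. First I would test this weak equation with $w = \lambda^{-1}u \in W_0^{1,p}$ and integrate by parts to get $\int_0^b (a(w'),w')_{\mathbb{R}^N}\,dt = \int_0^b (g,w)_{\mathbb{R}^N}\,dt$.

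Next I apply $H(a)$, namely $(a(y),y)_{\mathbb{R}^N}\ge c_0|y|^p$, with $y = w' = \lambda^{-1}u'$; after multiplying through by $\lambda$ and using that $\lambda^{1-p}>1$ for $0<\lambda<1$ and $p>1$, the scaling collapses to the clean bound
$$c_0\|u'\|_p^p \le \int_0^b (f,u)_{\mathbb{R}^N}\,dt + \int_0^b r|u'|^{p-2}(u',u)_{\mathbb{R}^N}\,dt.$$
For the first term, the \emph{uniform} $\limsup$ in $H(F)_1$(iii) together with the local bound $|F(t,x)|\le a_\eta(t)$ produces, for each $\epsilon>0$, a function $\beta_\epsilon \in L^1(T)_+$ with $(f(t),u(t))_{\mathbb{R}^N}\le\sup[(h,u(t)):h\in F(t,u(t))]\le (\theta(t)+\epsilon)|u(t)|^p+\beta_\epsilon(t)$ for a.a. $t$. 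For the second term, Hölder's inequality gives $\|r\|_\infty\|u'\|_p^{p-1}\|u\|_p$, and $\|u\|_p \le \widehat{\lambda}_1^{-1/p}\|u'\|_p$ from \eqref{eq3} turns this into $\widehat{\lambda}_1^{-1/p}\|r\|_\infty\|u'\|_p^p$. Substituting and regrouping, the surviving coercive quantity is exactly $\xi\|u'\|_p^p-\int_0^b\theta|u|^p\,dt=\psi(u)$, since $\xi = c_0-\widehat{\lambda}_1^{-1/p}\|r\|_\infty$, so that
$$\psi(u) \le \epsilon\|u\|_p^p + \|\beta_\epsilon\|_1.$$

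Now invoke Lemma \ref{lem2}, $c_1\|u\|^p\le\psi(u)$, and use $\|u\|_p^p\le\widehat{\lambda}_1^{-1}\|u\|^p$ (again \eqref{eq3} with $\|u\|=\|u'\|_p$) to obtain $(c_1-\epsilon\widehat{\lambda}_1^{-1})\|u\|^p\le\|\beta_\epsilon\|_1$. Fixing $\epsilon$ small enough that $c_1-\epsilon\widehat{\lambda}_1^{-1}>0$ yields a bound on $\|u\|$ in $W_0^{1,p}$ independent of $u\in S$. Finally I upgrade this to a $C_0^1$-bound by repeating the argument of Proposition \ref{prop3}: the $W_0^{1,p}$-bound makes $\eta=\sup_{u\in S}\|u\|_\infty$ finite, whence $\|f\|_1\le\|a_\eta\|_1$ and $\|r|u'|^{p-2}u'\|_1\le\|r\|_\infty b^{1/p}\|u'\|_p^{p-1}$ show $\{g\}$ is bounded in $L^1_N$; then from $a(w'(t))=a(w'(0))-\int_0^t g\,ds$ the boundedness criterion of Proposition 3.1 of Man\'asevich-Mawhin \cite{Ref11} bounds $\{a(w'(0))\}$, hence $\{a(w'(\cdot))\}$ in $C(T,\mathbb{R}^N)$, and continuity of $a^{-1}$ bounds $\{w'\}$, i.e. $\{u'=\lambda w'\}$, uniformly.

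The step I expect to be the main obstacle is the a priori $W_0^{1,p}$ estimate, where every constant must balance precisely: the role of $\xi$ and of Lemma \ref{lem2} is exactly to guarantee that, after absorbing the $r$-term into the $c_0$-term, the remaining quantity is $\psi(u)$, which Lemma \ref{lem2} keeps coercive even though $\theta$ is permitted to reach the threshold $\widehat{\lambda}_1\xi$. The delicate points are producing the $L^1$ remainder $\beta_\epsilon$ from the \emph{uniform} (in $t$) growth hypothesis and verifying that the $\lambda$-scaling works in our favour; the bootstrap to $C_0^1$ is then a routine repetition of Proposition \ref{prop3}.
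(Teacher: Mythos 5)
Your proposal is correct and follows essentially the same route as the paper's proof: acting on the equation with $u$ (your $\lambda$-scaling computation with $w=\lambda^{-1}u$ and $\lambda^{1-p}\geq 1$ is exactly how the paper exploits $0<\lambda<1$), absorbing the damping term via \eqref{eq3}, reducing the estimate through $H(F)_1$(iii) to $\psi(u)\leq\varepsilon\|u\|_p^p+\|\beta_\varepsilon\|_1$, invoking Lemma \ref{lem2} with $\varepsilon\in(0,\widehat{\lambda}_1 c_1)$, and bootstrapping to the $C_0^1$-bound by repeating the Man\'asevich--Mawhin argument of Proposition \ref{prop3}. The only cosmetic difference is that the paper pushes the last step slightly further (uniform integrability of the right-hand sides, yielding relative compactness of $S$ in $C_0^1$), whereas you stop at the uniform bound on $u'$, which is all the statement requires.
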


\begin{proof}
	Let $u \in S$. Then we have
\begin{align*}
& \frac{1}{\lambda} u \in K N(u),\\ \Rightarrow \quad & -a\left(\frac{1}{\lambda}u^\prime\right)^\prime -r(t)|u^\prime|^{p-2}u^\prime = f \quad \mbox{with }f \in S^1_{F(\cdot,u(\cdot))}.
\end{align*}	

We act with $u$ and perform integration by parts. We obtain
\begin{equation}
\label{eq15}\int_0^b \left(a\left(\frac{1}{\lambda}u^\prime\right),u^\prime \right)_{\mathbb{R}^N}dt -\int_0^b r(t) |u^\prime |^{p-2}(u^\prime,u)_{\mathbb{R}^N}dt= \int_0^b(f,u)_{\mathbb{R}^N}dt .
\end{equation}

We have
\begin{align}
\nonumber \left| \int_0^b r(t) |u^\prime |^{p-2}(u^\prime,u)_{\mathbb{R}^N}dt \right| & \leq \|r\|_\infty \int_0^b   |u^\prime |^{p-1}|u|dt \\ \nonumber & \leq \|r\|_\infty \|u^\prime \|_p^{p-1}\|u\|_p \quad \mbox{(by H\"{o}lder's inequality)}\\ \label{eq16} & \leq \frac{\|r\|_\infty}{\widehat{\lambda}^{1/p}} \|u^\prime \|_p^{p} \quad \mbox{(see \eqref{eq3})}.
\end{align}

Also from hypothesis $H(F)_1$(iii) we see that given $\varepsilon >0$, we can find $a_\varepsilon \in L^1(T)_+$ such that
\begin{equation}
\label{eq17} (f(t),u(t))_{\mathbb{R}^N} \leq a_\varepsilon(t)+[\theta(t)+\varepsilon]|u(t)|^p \quad \mbox{for a.a. }t \in T.
\end{equation}

We return to \eqref{eq15} and use hypothesis $H(a)$, the fact that $0<\lambda<1$ and \eqref{eq16}, \eqref{eq17}. Then
\begin{align*}
&\left[c_0 - \frac{\|r\|_\infty}{\widehat{\lambda}_1^{1/p}}\right]\|u^\prime\|_p^p \leq \|a_\varepsilon\|_1 + \int_0^b \theta(t)|u|^p dt + \frac{\varepsilon}{\widehat{\lambda}_1}\|u^\prime\|_p^p,\\ \Rightarrow \quad & \xi \|u^\prime\|_p^p - \int_0^b \theta(t)|u|^p dt-\frac{\varepsilon}{\widehat{\lambda}_1}\|u^\prime\|_p^p \leq \|a_\varepsilon\|_1,\\ \Rightarrow \quad & \left[c_1 - \frac{\varepsilon}{\widehat{\lambda}_1} \right]\|u\|^p \leq \|a_\varepsilon\|_1 \quad \mbox{(see Lemma \ref{lem2}).}
\end{align*}

Choosing $\varepsilon \in (0, \widehat{\lambda}_1c_1)$, we infer that
\begin{align}
\nonumber & S \subseteq W_0^{1,p} \mbox{ is bounded},\\ \label{eq18} \Rightarrow \quad & S \subseteq C_0 \mbox{ is compact and $\{u^\prime\}_{u \in S}\subseteq L^p_N$ is bounded.}
\end{align}

We have
\begin{equation}
\label{eq19} -a\left(\frac{1}{\lambda}u^\prime\right)^\prime =f+r|u^\prime|^{p-2}u^\prime.
\end{equation}

Let $h_u= f+r|u^\prime|^{p-2}u^\prime$, $u \in S$. From \eqref{eq18} and hypothesis $H(F)_1$(iii) it follows that $$\{h_u\}_{u \in S}\subseteq L^1_N \mbox{ is uniformly integrable.}
$$

This fact and \eqref{eq19} (recall $0<\lambda<1$), as in the proof of Proposition \ref{prop3} imply that $S \subseteq C_0^1$ is bounded (in fact relatively compact).\end{proof}

Applying Proposition \ref{prop1}, we have the following existence theorem for problem \eqref{eq1}.

\begin{theorem}
	\label{T6} If hypotheses $H(a)$, $H(r)$, $H(F)_1$ hold, then problem \eqref{eq1} admits a solution $u_0 \in C_0^1$.
\end{theorem}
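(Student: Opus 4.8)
The plan is to apply the multivalued Leray--Schauder alternative (Proposition \ref{prop1}) to the composition $G=K\circ N$ on the Banach space $Y=V=C_0^1$. First I would collect the ingredients already assembled: Proposition \ref{prop4} gives that $N_1$ is $P_{wkc}(L^1_N)$-valued and usc from $C_0^1$ (norm topology) into $(L^1_N,w)$, and adding the continuous single-valued map $u \mapsto r(\cdot)|u^\prime|^{p-2}u^\prime$ preserves these properties, so $N:C_0^1 \to P_{wkc}(L^1_N)$ is usc into $(L^1_N,w)$; Proposition \ref{prop3} gives that $K:L^1_N \to C_0^1$ is completely continuous. Hence, by the discussion preceding Proposition \ref{prop1}, $G=K\circ N$ is a compact multifunction from $C_0^1$ into itself, so the abstract framework applies.

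Next, I would invoke Proposition \ref{prop1}, which presents the dichotomy: either the set $S=\{u \in C_0^1 : u \in \lambda G(u),\ 0<\lambda<1\}$ is unbounded, or $G(\cdot)$ admits a fixed point. But Proposition \ref{prop5} has already shown that $S \subseteq C_0^1$ is bounded (indeed relatively compact), which rules out the first alternative. Therefore the second alternative must hold, and there exists $u_0 \in C_0^1$ with $u_0 \in G(u_0)=K(N(u_0))$.

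Finally, I would unwind this fixed-point relation to recover a solution of \eqref{eq1}. The inclusion $u_0 \in K(N(u_0))$ means that there is a selection $f \in N_1(u_0)=S^1_{F(\cdot,u_0(\cdot))}$ such that $u_0 = K\left(f+r(\cdot)|u_0^\prime|^{p-2}u_0^\prime\right)$. By the defining property of the solution map $K$ for the auxiliary problem \eqref{eq6}, this is equivalent to
$$-a(u_0^\prime(t))^\prime = f(t)+r(t)|u_0^\prime(t)|^{p-2}u_0^\prime(t) \quad \mbox{for a.a.\ } t \in T, \quad u_0(0)=u_0(b)=0.$$
Since $f(t) \in F(t,u_0(t))$ for a.a.\ $t \in T$, a rearrangement yields
$$-a(u_0^\prime(t))^\prime - r(t)|u_0^\prime(t)|^{p-2}u_0^\prime(t) = f(t) \in F(t,u_0(t)) \quad \mbox{for a.a.\ } t \in T,$$
which is precisely the differential inclusion \eqref{eq1}. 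Thus $u_0 \in C_0^1$ is the desired solution.

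I expect the genuine mathematical content to lie not in this theorem itself but in the supporting results already proved --- in particular, the a priori bound of Proposition \ref{prop5}, which is exactly what licenses the application of the Leray--Schauder alternative by excluding the unbounded alternative. The only point in the theorem proper requiring care is the translation of the abstract fixed point into a solution of the inclusion, namely correctly identifying the selection $f$ and the role of $K$ in passing from $u_0 \in K(N(u_0))$ back to the pointwise differential relation; this is routine once the definitions of $N$, $N_1$, and $K$ are tracked carefully.
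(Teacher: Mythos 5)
Your proposal is correct and follows exactly the paper's route: the paper's proof of Theorem \ref{T6} consists of the single observation that Proposition \ref{prop1} applies, with Propositions \ref{prop3} and \ref{prop4} supplying the compactness of $G=K\circ N$ and Proposition \ref{prop5} excluding the unbounded alternative. Your explicit unwinding of the fixed point $u_0\in K(N(u_0))$ into the inclusion \eqref{eq1} is accurate and merely spells out what the paper leaves implicit.
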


\begin{rem}
	It is clear from the proof of Proposition \ref{prop5}, that under the above hypotheses the solution set of \eqref{eq1} is compact in $C_0^1$.
\end{rem}

\section{Nonconvex Problem}\label{S4}

In this section, we prove an existence theorem for the case when $F(t,x)$ has nonconvex values. Now the hypotheses on $F$ are the following:
\begin{itemize}
	\item[$H(F)_2$:] $F:T  \times \mathbb{R}^N \to P_{f}(\mathbb{R}^N)$ is a multifunction such that
	\begin{enumerate}
		\item[(i)] $(t,x) \to F(t,x)$ is graph measurable (that is ${\rm Gr}F \in \mathcal{L}_T \otimes B(\mathbb{R}^N)$);
		\item[(ii)] for a.a. $t \in T$, $x \to F(t,x)$ is lsc;
		\item[(iii)] the same as hypothesis $H(F)_1$(iii).
\end{enumerate}	\end{itemize}

\begin{theorem}
	\label{T7} If hypotheses $H(a)$, $H(r)$, $H(F)_2$ hold, then problem \eqref{eq1} admits a solution $u_0 \in C_0^1$.
\end{theorem}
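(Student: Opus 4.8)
The plan is to reduce the nonconvex problem to a single-valued fixed point problem by extracting a \emph{continuous} selection from the superposition multifunction $u \mapsto S^1_{F(\cdot,u(\cdot))}$, and then to apply Proposition \ref{prop1} exactly as in the convex case. Since $H(F)_2$(iii) coincides with $H(F)_1$(iii), every a priori estimate obtained for the convex problem remains at our disposal; the only genuinely new ingredient is the passage from pointwise lower semicontinuity of $F(t,\cdot)$ to a continuous selection of the selection multifunction. Throughout, $K:L_N^1\to C_0^1$ is the completely continuous solution map of Proposition \ref{prop3}.

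First I would verify that $N_1:C_0^1\to 2^{L_N^1}$, $N_1(u)=S^1_{F(\cdot,u(\cdot))}$, takes nonempty, closed, decomposable values and is lower semicontinuous from $C_0^1$ (norm topology) into $L_N^1$ (norm topology). For nonemptiness, note that for $u\in C_0^1$ the map $t\mapsto F(t,u(t))$ is graph measurable (compose the graph-measurable $F$ of $H(F)_2$(i) with the continuous, hence measurable, $u$), while $H(F)_2$(iii) gives the $L^1$-bound $|F(t,u(t))|\le a_\eta(t)$ with $\eta=\|u\|_\infty$; the Yankov--von Neumann--Aumann selection theorem then yields $S^1_{F(\cdot,u(\cdot))}\neq\emptyset$. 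Closedness in $L_N^1$ follows from the closedness of the values of $F$ together with this local bound, and decomposability is automatic for selection sets. Lower semicontinuity is the key structural point: using $H(F)_2$(ii) and the standard characterization of lsc of superposition multioperators (see Hu--Papageorgiou \cite{Ref5}), for $u_n\to u$ in $C_0^1$ and $f\in S^1_{F(\cdot,u(\cdot))}$ one constructs $f_n\in S^1_{F(\cdot,u_n(\cdot))}$ with $f_n\to f$ in $L_N^1$.

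With $N_1(\cdot)$ lower semicontinuous and with nonempty, closed, decomposable values, the Bressan--Colombo continuous selection theorem (see Hu--Papageorgiou \cite{Ref5}) furnishes a continuous map $g:C_0^1\to L_N^1$ with $g(u)\in S^1_{F(\cdot,u(\cdot))}$ for all $u\in C_0^1$. I would then set $\widehat{N}(u)=g(u)+r(\cdot)|u^\prime(\cdot)|^{p-2}u^\prime(\cdot)$, which is single-valued and continuous from $C_0^1$ into $L_N^1$, hence trivially usc from $C_0^1$ into $(L_N^1,w)$ with singleton (thus weakly compact, convex) values. By Proposition \ref{prop3} the composition $K\circ\widehat{N}$ is compact from $C_0^1$ into itself: for bounded $B\subseteq C_0^1$ the local bound $a_\eta$ makes $\widehat{N}(B)$ uniformly integrable, hence relatively weakly compact by Dunford--Pettis, and complete continuity of $K$ turns weakly convergent sequences into norm-convergent images. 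To bound $S=\{u\in C_0^1: u=\lambda K\widehat{N}(u),\ 0<\lambda<1\}$, observe that $u\in S$ satisfies $-a((1/\lambda)u^\prime)^\prime-r|u^\prime|^{p-2}u^\prime=g(u)\in S^1_{F(\cdot,u(\cdot))}$, which is precisely \eqref{eq15} with $f=g(u)$; since $H(F)_2$(iii) equals $H(F)_1$(iii), the computation in the proof of Proposition \ref{prop5} (via \eqref{eq16}, \eqref{eq17} and Lemma \ref{lem2}) shows $S$ is bounded. Proposition \ref{prop1} then rules out alternative $(a)$, so $K\widehat{N}(\cdot)$ has a fixed point $u_0$, and from $-a(u_0^\prime)^\prime-r|u_0^\prime|^{p-2}u_0^\prime=g(u_0)\in F(\cdot,u_0(\cdot))$ we conclude that $u_0\in C_0^1$ solves \eqref{eq1}.

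I expect the main obstacle to be the second step, namely establishing lower semicontinuity of $u\mapsto S^1_{F(\cdot,u(\cdot))}$ and checking the hypotheses of the continuous selection theorem. One must pass carefully from the pointwise lsc of $F(t,\cdot)$ to lsc of the selection multifunction in the relevant topologies, uniformly controlling the approximating selections through the local $L^1$-bound $a_\eta$; once the continuous selection $g$ is in hand, the remainder of the argument is a faithful single-valued repetition of the convex case.
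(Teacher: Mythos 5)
Your proposal is correct and follows essentially the same route as the paper: establish that $u\mapsto S^1_{F(\cdot,u(\cdot))}$ is lsc with nonempty, closed, decomposable values, extract a continuous selection $\gamma$ via the Bressan--Colombo/Fryszkowski-type theorem (Theorem 8.7, p.~245, of Hu--Papageorgiou \cite{Ref5}), and solve the resulting single-valued problem. The only cosmetic difference is that the paper concludes by invoking Theorem \ref{T6} for the selected problem, whereas you explicitly re-run the Leray--Schauder argument of Propositions \ref{prop3}--\ref{prop5} for $K\circ\widehat{N}$ --- which is precisely what that invocation amounts to, and is arguably the more careful reading since $\gamma(u)(t)$ is a nonlocal right-hand side rather than a Nemytskii map of the form $F(t,u(t))$.
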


\begin{proof} On account of hypothesis $H(F)_2$(i), for every $u \in C_0^1$, we have that 
\begin{align*}
& t \to F(t,u(t)) \mbox{ is graph measurable}\\ & \hskip 4cm \mbox{(see Hu-Papageorgiou \cite{Ref5}, Theorem 2.4, p. 156)}\\ \Rightarrow \quad & N_1(u)=S^1_{F(\cdot,u(\cdot))} \in P_f(L_N^1) \mbox{ for all $u \in C_0^1$} \quad \mbox{(see hypothesis $H(F)_2$(iii))}.
\end{align*}	

From Hu-Papageorgiou \cite{Ref6}, Proposition 2.7, p. 237, we have that $N_1:C_0^1 \to P_f(L^1_N)$ is lsc. Also, it has decomposable values. So, Theorem 8.7, p. 245, of Hu-Papageorgiou \cite{Ref5} implies that we can find a continuous map $\gamma : C_0^1 \to L_N^1$ such that $$\gamma(u) \in N_1(u) \quad \mbox{for all } u \in C_0^1.$$

We consider the following Dirichlet problem
$$-a(u^\prime(t))^\prime -r(t)|u^\prime(t)|^{p-2}u^\prime(t)=\gamma(u)(t) \quad \mbox{for a.a. $t \in T$, $u(0)=u(b)=0$.}$$

Then from Theorem \ref{T6} we know that this problem has a solution $u_0 \in C_0^1$. Evidently $u_0$ is also a solution of \eqref{eq1}.
	\end{proof}
	
\section{Relaxation Theorem}\label{S5}

In this section $p=2$ and we deal with the following two problems:
\begin{align}
&-a(u^\prime(t))^\prime -r(t)u(t) \in F(t,u(t)) \quad \mbox{for a.a. $t \in T$, $u(0)=u(b)=0$,}\label{eq20}\\&-a(u^\prime(t))^\prime -r(t)u(t) \in {\rm \overline{conv} } F(t,u(t)) \quad \mbox{for a.a. $t \in T$, $u(0)=u(b)=0$.}\label{eq21}
\end{align}	

By $\widehat{S} \subseteq C_0^1$ we denote the solution set of \eqref{eq20} and by $\widehat{S}_c \subseteq C_0^1$ the solution set of \eqref{eq21}. 

Under stronger conditions on the map $a(\cdot)$ and the orientor field $F$, we show that  $$\widehat{S}_c = \overline{\widehat{S}}^{C_0^1}.$$

Such a result is known as ``relaxation theorem'' and has important applications in control theory.

The new stronger conditions on the map $a(\cdot)$ are the following:

\begin{itemize}
	\item[$H(a)^\prime$:] $a: \mathbb{R}^N \to \mathbb{R}^N$ is continuous and 
	$$c_0|y|^2 \leq (a(y),y)_{\mathbb{R}^N} \quad \mbox{for all $y \in \mathbb{R}^N$, some  $c_0>0$},$$ and for every $\eta>0$, there exists $\widehat{c}_\eta>0$ such that
	$$\widehat{c}_\eta|y-v|^2 \leq (a(y)-a(v),y-v)_{\mathbb{R}^N} \quad \mbox{for all $|y|,|v|  \leq \eta$}.$$
\end{itemize}

\begin{rem} Clearly $a(\cdot)$ is strictly monotone and maximal monotone too.
\end{rem}

\begin{exmp} The following maps satisfy hypotheses $H(a)^\prime$:
	\begin{align*}
	a(y)&=\widehat{c}y \quad \mbox{for all $y \in \mathbb{R}^N$, with $\widehat{c}>0$},\\	a(y)&=\begin{cases}|y|^{q-2}y & \mbox{if $|y| \leq 1$,}\\y & \mbox{if $1 <|y|, $} \end{cases} \mbox{ with } 1<q<2, \\
	a(y)&=|y|^{p-2}y +y\quad \mbox{for all $y \in \mathbb{R}^N$, with  $1 < p < +\infty$},\\
	a(y)&=(1+|y|^{2})^\frac{p-2}{2}+y \quad \mbox{for all $y \in \mathbb{R}^N$, with $1 < p < +\infty$},\\
	a(y)&=2y e^{|y|^2}+y\quad \mbox{for all $y \in \mathbb{R}^N$}.
	\end{align*}
\end{exmp}

The new hypotheses on the multivalued perturbation $F(t,x)$ are the following:

\begin{itemize}
	\item[$H(F)_3$:] $F:T  \times \mathbb{R}^N \to P_{f}(\mathbb{R}^N)$ is a multifunction such that
	\begin{enumerate}
		\item[(i)] for all $x \in \mathbb{R}^N$, $t \to F(t,x)$ is graph measurable;
		\item[(ii)] for every $\eta>0$, there exists $k_\eta \in L^\infty(T)_+$ such that 
$$\widehat{\xi}_\eta =\widehat{c}_\eta -\dfrac{\|r\|_\infty}{\widehat{\lambda}_1^{1/2}}-\|k_\eta\|_\infty b^2>0,$$ 
		and
		$$h(F(t,x),F(t,v)) \leq k_\eta(t)|x-v| \quad \mbox{for a.a. $t \in T$, all $|x|,|v| \leq \eta$};$$
		\item[(iii)] the same as hypothesis $H(F)_1$(iii) with $p=2$.\end{enumerate}\end{itemize}
	
\begin{rem}
	Under the above hypotheses $\emptyset \neq \widehat{S} \subseteq \widehat{S}_c \in P_k(C_0^1)$.
\end{rem}	
	
\begin{theorem}
	\label{T8} If hypotheses $H(a)^\prime$, $H(r)$, $H(F)_3$ hold, then $\widehat{S}_c = \overline{\widehat{S}}^{C_0^1}.$
\end{theorem}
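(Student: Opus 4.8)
The plan is to establish the two inclusions $\overline{\widehat{S}}^{C_0^1} \subseteq \widehat{S}_c$ and $\widehat{S}_c \subseteq \overline{\widehat{S}}^{C_0^1}$ separately. The first is immediate: since $F(t,x) \subseteq \overline{\mathrm{conv}}\,F(t,x)$, every solution of the nonconvex problem \eqref{eq20} solves the convexified problem \eqref{eq21}, so $\widehat{S} \subseteq \widehat{S}_c$; as $\widehat{S}_c$ is compact, hence closed, in $C_0^1$ by the preceding remark, passing to closures gives $\overline{\widehat{S}}^{C_0^1} \subseteq \widehat{S}_c$. The whole difficulty lies in the reverse (density) inclusion, for which I would adapt the Filippov--Wa\.zewski relaxation scheme to the present operator/fixed-point setting, replacing the usual Gronwall estimate by the local strong monotonicity in $H(a)^\prime$ and the smallness condition $\widehat{\xi}_\eta > 0$.

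So I fix $u \in \widehat{S}_c$ and $\varepsilon > 0$. By definition there is $g \in S^1_{\overline{\mathrm{conv}}\,F(\cdot,u(\cdot))}$ with $u = K(g + r u)$, where $K$ is the solution map of the auxiliary problem \eqref{eq6} furnished by Proposition \ref{prop3}. Since $\widehat{S}_c$ is compact in $C_0^1$, I can fix $\eta > 0$ bounding the $C_0^1$-norms of all elements of $\widehat{S}_c$ together with a unit margin; this produces, via $H(a)^\prime$ and $H(F)_3$(ii), the constants $\widehat{c}_\eta$ and $k_\eta$, and the hypothesis $\widehat{\xi}_\eta > 0$ makes $\kappa := \|k_\eta\|_\infty b^2 \big/ \big(\widehat{c}_\eta - \|r\|_\infty \widehat{\lambda}_1^{-1/2}\big) < 1$ a genuine contraction constant. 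I then build a sequence $\{u_n\}$ of solutions of the nonconvex problem converging to $u$. The first term is obtained by ``deconvexifying'' $g$: using that $S^1_{\overline{\mathrm{conv}}\,F(\cdot,u(\cdot))}$ is the weak $L^1_N$-closure of $S^1_{F(\cdot,u(\cdot))}$, I choose $f_0 \in S^1_{F(\cdot,u(\cdot))}$ weakly close to $g$; since $K$ is completely continuous, the corresponding solution $u_1$ of $-a(u_1^\prime)^\prime - r u_1 = f_0$ satisfies $\|u_1 - u\|_{C_0^1} < (1-\kappa)\varepsilon$.

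The iteration proceeds by a measurable closest-point selection: given $u_n$ and $f_{n-1} \in S^1_{F(\cdot,u_{n-1}(\cdot))}$, I select a measurable $f_n(t) \in F(t,u_n(t))$ realizing $d(f_{n-1}(t), F(t,u_n(t)))$, so that by the Lipschitz property in $H(F)_3$(ii) one has $|f_n(t) - f_{n-1}(t)| \le k_\eta(t)\,|u_n(t) - u_{n-1}(t)|$ for a.a. $t \in T$, and I let $u_{n+1}$ solve $-a(u_{n+1}^\prime)^\prime - r u_{n+1} = f_n$, which exists by the proof of Theorem \ref{T6}. Subtracting the equations for $u_{n+1}$ and $u_n$, pairing with $w = u_{n+1} - u_n$ and integrating by parts, the local strong monotonicity bounds the principal part below by $\widehat{c}_\eta \|w^\prime\|_2^2$, the linear term $\int_0^b r\,|w|^2\,dt$ is absorbed by the Poincar\'e inequality \eqref{eq3} (accounting for the summand $\|r\|_\infty\widehat{\lambda}_1^{-1/2}$ in $\widehat{\xi}_\eta$), and the Lipschitz bound combined with $|w(t)| \le b^{1/2}\|w^\prime\|_2$ produces the summand $\|k_\eta\|_\infty b^2$. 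This yields $\|(u_{n+1}-u_n)^\prime\|_2 \le \kappa\,\|(u_n - u_{n-1})^\prime\|_2$, so $\{u_n\}$ is Cauchy in $W_0^{1,2}$ and stays within $\|u_n - u\| \le \varepsilon$, which keeps every iterate inside the $\eta$-ball and closes the induction.

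Finally I would upgrade the convergence and identify the limit. The right-hand sides $\{f_n + r u_n\}$ are uniformly integrable (by $H(F)_3$(iii) and the uniform bounds), so the argument of Proposition \ref{prop3} promotes the $W_0^{1,2}$-Cauchy sequence to a $C_0^1$-convergent one, $u_n \to y$. The estimate $|f_n - f_{n-1}| \le k_\eta|u_n - u_{n-1}|$ makes $\{f_n\}$ Cauchy in $L^1_N$, $f_n \to f$, and $d(f(t),F(t,y(t))) \le |f(t) - f_n(t)| + k_\eta(t)|u_n(t) - y(t)| \to 0$ gives $f \in S^1_{F(\cdot,y(\cdot))}$; passing to the limit in $u_{n+1} = K(f_n + r u_{n+1})$ by continuity of $K$ shows $y \in \widehat{S}$. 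Since $\|y - u\|_{C_0^1} \le \sum_{n\ge 0}\|u_{n+1}-u_n\|_{C_0^1} \le (1-\kappa)^{-1}\|u_1 - u\|_{C_0^1} < \varepsilon$, the point $u$ lies in $\overline{\widehat{S}}^{C_0^1}$, which completes the inclusion. I expect the main obstacle to be the first ``deconvexification'' step, namely converting the weak approximation of the convex selection $g$ by genuine selections of $F$ into a strong $C_0^1$-approximation of solutions; this is exactly where the complete continuity of $K$ is indispensable, and it is followed by the bookkeeping needed to keep every iterate inside the $\eta$-ball on which the contraction constant is valid.
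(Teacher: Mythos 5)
Your strategy is genuinely different from the paper's: you run a Filippov--Wa\.zewski successive-approximation scheme (measurable closest-point selections, then a contraction estimate in $W_0^{1,2}$ coming from the local strong monotonicity), whereas the paper proves density in one shot: it takes $f_n \xrightarrow{w} f$ with $f_n \in S^1_{F(\cdot,u(\cdot))}$, builds \emph{continuous} selections $g_n$ of the near-closest-point multifunction $v \mapsto \overline{S^1_{L^v_n}}$ via the selection theorem for lsc maps with decomposable values (the same tool as in Theorem \ref{T7}), solves the fixed-point problem \eqref{eq22} by Theorem \ref{T6} to obtain $v_n \in \widehat{S}$ directly, extracts $v_n \to v$ in $C_0^1$ by the compactness apparatus of Propositions \ref{prop3} and \ref{prop5}, and identifies $v=u$ with a single energy estimate driven by $\widehat{\xi}_\eta>0$. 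Your contraction computation reproduces the paper's inequality with the same constants, and your first inclusion is fine; but as written your scheme has two genuine gaps. The first is the deconvexification step: the claim that, because $K$ is completely continuous, the solution $u_1$ of $-a(u_1')'-ru_1=f_0$ satisfies $\|u_1-u\|_{C_0^1}<(1-\kappa)\varepsilon$ does not follow. The solution map of the full Duffing problem is not $K$: $u_1$ solves the implicit equation $u_1=K(f_0+ru_1)$, and weak $L^1_N$-proximity of $f_0$ to $g$ gives no direct control on $u_1$. What is true, and must be proved, is that for $f^{(m)} \xrightarrow{w} g$ the corresponding solutions are relatively compact in $C_0^1$ (uniform integrability plus the argument of Proposition \ref{prop3}), that every limit point solves the problem with right-hand side $g$, and that this limit problem has a \emph{unique} solution, so the limit must be $u$; uniqueness does hold here, by the same $\widehat{c}_\eta$ versus $\|r\|_\infty\widehat{\lambda}_1^{-1/2}$ computation as your contraction, but you never state or use it, and without it the limit could be a different solution with the same right-hand side.

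The second gap is a norm mismatch in the iteration. Your contraction controls $\|(u_{n+1}-u_n)'\|_2$, yet your final display sums $C_0^1$-increments with ratio $\kappa$: $\kappa$ is not a contraction constant for the $C^1$ norm, no $C^1$ increment bound is ever derived, and so $\|y-u\|_{C_0^1}<\varepsilon$ -- which is what the theorem's $C_0^1$-closure requires -- is unproved (you only obtain it in $W_0^{1,2}$). Relatedly, the constant $\widehat{c}_\eta$ in $H(a)^\prime$ is available only when the \emph{derivative} values satisfy $|u_n'(t)|,|u_{n+1}'(t)|\le\eta$, and the bound $\|u_n-u\|\le\varepsilon$ in $W_0^{1,2}$ controls $\|u_n\|_\infty$ but not $\|u_n'\|_\infty$; so "keeping every iterate inside the $\eta$-ball" cannot be closed by the $W^{1,2}$ estimate alone. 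It needs the uniform $C^1$ bound obtained from uniform integrability of $\{f_n+ru_n\}_{n\ge 1}$, the Man\'asevich--Mawhin bound on $\{a(u_n'(0))\}_{n \geq 1}$ and the continuity of $a^{-1}$, i.e.\ the machinery of Proposition \ref{prop3} must be invoked \emph{before} each contraction step, not only at the very end. Both gaps are repairable, and the cleanest repair essentially reproduces the paper's architecture: establish uniform $C_0^1$-compactness of the whole approximating family first, and use the $W^{1,2}$ energy estimate only to identify the limit, rather than to sum a geometric series in a norm in which no contraction has been shown.
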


\begin{proof}
Let $u \in \widehat{S}_c$. Then we have 
$$-a(u^\prime(t))^\prime -r(t)u^\prime(t)=f(t) \quad \mbox{for a.a. $t \in T$, $u(0)=u(b)=0$,}$$	
with $f \in S^1_{ {\rm \overline{conv} } F(\cdot,u(\cdot))}
$.

Proposition 3.30, p. 185, of Hu-Papageorgiou \cite{Ref5} says that we can find $\{f_n\}_{n \geq 1}\subseteq S^1_{ F(\cdot,u(\cdot))}$ such that $$f_n \xrightarrow{w} f \mbox{ in } L^1_N.$$

Let $v \in W^{1,p}_0$ and $\varepsilon_n \to 0^+$. Consider the multifunction $L^v_n : T \to 2^{\mathbb{R}^N}\setminus \{\emptyset\}$ defined by 
$$L^v_n(t) =\{h \in \mathbb{R}^N : |f_n(t)-h| < \varepsilon_n +d(f_n(t),F(t,v(t))), \, h \in F(t,v(t))\}.$$

Clearly ${\rm Gr}L^v_n  \in \mathcal{L}_T \otimes B(\mathbb{R}^N)$  (recall $\mathcal{L}_T$ is the Lebesgue $\sigma$-field of $T$ and $ B(\mathbb{R}^N)$ the Borel $\sigma$-field of  $\mathbb{R}^N$). By the Yankov-von Neumann-Aumann selection theorem (see Hu-Papageorgiou \cite{Ref5}, Theorem 2.14, p. 158), we can find $h_n :T \to \mathbb{R}^N$, $n \in \mathbb{N}$, a measurable map such that 
$$h_n(t) \in L^v_n(t) \quad \mbox{for a.a. $t \in T$, all $n \in \mathbb{N}$.}$$

Evidently $h_n \in L^1_N$ (that is, $h_n \in S^1_{L^v_n}$).	

We consider the multifunction $G_n:W_0^{1,p} \to 2^{L^1_N}$ defined by
$$G_n(v)=S^1_{L^v_n}.$$

We have just seen that for all $v \in W_0^{1,p}$ and all $n \in \mathbb{N}$, $G_n(v) \neq \emptyset$. Moreover, using Lemma 8.3, p. 239, of Hu-Papageorgiou \cite{Ref5}, we have that 
\begin{align*}
& v \to G_n(v) \mbox{ is lsc},\\
\Rightarrow \quad & v \to \overline{G_n(v)}  \mbox{ is lsc (see \cite{Ref5}, Proposition 2.38, p. 50).}
\end{align*}

Of course this multifunction has decomposable values. So, we can find a continuous map $g_n : W^{1,p}_0 \to L_N^1$, $n \in \mathbb{N}$, such that 
$$g_n(v) \in \overline{G_n(v)}  \quad \mbox{for all $v \in W^{1,p}_0$, all $n \in \mathbb{N}$.}$$

We consider the following nonlinear Duffing system

\begin{equation}
\label{eq22} \begin{cases} - a(v^\prime(t))^\prime -r(t)v^\prime(t)=g_n(v)(t) & \mbox{for a.a. } t \in T,\\ v(0)=v(b)=0, \, n \in \mathbb{N}. &\end{cases}
\end{equation}

This problem has a solution $v_n \in C_0^1$ (see Theorem \ref{T6}).

From \eqref{eq22}, reasoning as in the proof of Proposition \ref{prop5}, we have 
\begin{align}\nonumber
&\left[c_0 - \frac{\|r\|_\infty}{\widehat{\lambda}_1^{1/2}}\right]\|v_n^\prime\|_2^2 \leq \int_0^b (g_n(v_n),v_n)_{\mathbb{R}^N}dt \\ \nonumber & \hskip 3.5cm \leq \int_0^b \left[ a_\varepsilon(t) + ( \theta(t)+\varepsilon)|v_n|^2\right] dt \quad \mbox{(see \eqref{eq17})},\\ \nonumber \Rightarrow \quad & \xi \|v_n^\prime\|_2^2 - \int_0^b \theta(t)|v_n|^2 dt-\frac{\varepsilon}{\widehat{\lambda}_1}\|v_n^\prime\|_2^2 \leq \|a_\varepsilon\|_1,\\ \nonumber \Rightarrow \quad & \left[c_1 - \frac{\varepsilon}{\widehat{\lambda}_1} \right]\|v_n^\prime\|^2_2 \leq \|a_\varepsilon\|_1 \quad \mbox{for all $n \in \mathbb{N}$ (see Lemma \ref{lem2}),}\\ \label{eq23} \Rightarrow \quad & \{v_n\}_{n \geq 1} \subseteq W^{1,2}_0 \mbox{ is bounded (choose $\varepsilon \in (0, \widehat{\lambda}_1c_1)$).}
\end{align}

From \eqref{eq23}, as in the proof of Proposition \ref{prop3} (see the part of the proof from \eqref{eq8} and after), we establish that $\{v_n\}_{n \geq 1} \subseteq C^{1}_0$ is relatively compact. So, we may assume that 
\begin{equation}
\label{eq24} v_n \to v \mbox{ in } C_0^1.
\end{equation}

We have 
$$- a(v_n^\prime)^\prime +a(u^\prime)^\prime -r(t)[v_n^\prime -u^\prime]=g_n(v_n)-f, \quad n \in \mathbb{N}.$$

We act with $v_n-u$ and after integration by parts, we obtain 
\begin{align}\nonumber
& \int_0^b  (a(v_n^\prime)-a(u^\prime),v_n^\prime-u^\prime)_{\mathbb{R}^N}dt - \int_0^b  r(t)(v_n^\prime-u^\prime, v_n -u )_{\mathbb{R}^N}dt \\   = \, &\int_0^b  (g_n(v_n)-f, v_n -u )_{\mathbb{R}^N}dt. \label{eq25}
\end{align}

Let $\eta=\max \{\sup_{n \geq 1}\|v_n\|_\infty,\|u\|_\infty\}>0$. Using hypothesis $H(a)^\prime$ for this $\eta>0$ we have
\begin{equation}
\label{eq26}\widehat{c}_\eta \|v_n^\prime -u^\prime\|^2_2 \leq \int_0^b  (a(v_n^\prime)-a(u^\prime),v_n^\prime-u^\prime)_{\mathbb{R}^N}dt.
\end{equation}

Also, we have 
\begin{align}
\nonumber \left|\int_0^b  r(t)(v_n^\prime-u^\prime, v_n -u )_{\mathbb{R}^N}dt \right| & \leq \|r\|_\infty \| v_n^\prime-u^\prime\|_2 \|v_n-u\|_2\\ \label{eq27}& \leq  \frac{\|r\|_\infty}{\widehat{\lambda}_1^{1/2}}\|v_n^\prime-u^\prime\|^2_2 \quad \mbox{(see \eqref{eq3})}.
\end{align}

Moreover,
\begin{align}
&\nonumber \left|\int_0^b  (g_n(v_n)-f, v_n -u )_{\mathbb{R}^N}dt \right| \\ \label{eq28} \leq & \left|\int_0^b  (f-f_n, v_n -u )_{\mathbb{R}^N}dt 
\right|+  \int_0^b  |(g_n(v_n)-f_n| | v_n -u |dt.\end{align}

Note that
\begin{equation}
\label{eq29}\int_0^b  (f-f_n, v_n -u )_{\mathbb{R}^N}dt \to 0 \mbox{ as } n\to +\infty.
\end{equation}

Also with $\eta>0$ as above, we have
\begin{align}
\nonumber \int_0^b | (g_n(v_n)-f| | v_n -u |dt  & \leq \int_0^b  [\varepsilon_n+h(F(s,v_n),F(s,u))] | v_n -u |ds \\ \label{eq30} & \leq  2 \eta b \varepsilon_n +\int_0^b    k_\eta(t)| v_n -u |^2dt \quad \mbox{for all } n \in \mathbb{N}.\end{align}

Using \eqref{eq29}, \eqref{eq30} in \eqref{eq28}, we obtain
\begin{align}
\nonumber \left|\int_0^b  (g_n(v_n)-f, v_n -u )_{\mathbb{R}^N}dt \right|   &  \leq \varepsilon^\prime_n +\int_0^b    k_\eta(t)| v_n -u |^2dt\\ &  \leq \varepsilon^\prime_n +\|k_\eta\|_\infty b^2\| v^\prime_n -u^\prime \|_2^2 \quad \mbox{with } \varepsilon^\prime_n \to 0^+ \label{eq31}\\ \nonumber & \mbox{(here we have used Jensen's inequality).}\end{align}

Returning to \eqref{eq25} and using \eqref{eq26},  \eqref{eq27} and  \eqref{eq31}, we have 
\begin{align*}
&\left[\widehat{c}_\eta - \frac{\|r\|_\infty}{\widehat{\lambda}_1^{1/2}}-\|k_\eta\|_\infty b^2\right]\|v_n^\prime-u^\prime\|_2^2 \leq \varepsilon_n,\\ \Rightarrow \quad & \widehat{\xi}_\eta \|v^\prime-u^\prime\|_2^2 =0 \quad \mbox{(see \eqref{eq24})},\\ \Rightarrow \quad & v=u.
\end{align*}

So, $v_n \to u$ in $C_0^1$ (see \eqref{eq24}) and $v_n \in \widehat{S}$ for all $n \in \mathbb{N}$. Therefore $\widehat{S}_c = \overline{\widehat{S}}^{C_0^1}.$
\end{proof}

\begin{rem}
Continuing this line of work, it is interesting to know if we can have extremal solutions for the multivalued Duffing system (that is, solutions of \eqref{eq1} when $F(t,x)$ is replaced by ${\rm ext} F(t,x)=$ the extreme points of $F(t,x)$). If such trajectories exist, then we would like to know if they are $C_0^1$-dense in those of the convex problem (strong relaxation). Such a result is of interest in control theory in connection with the ``bang-bang principle''. Results of this kind were proved for a different class of multivalued nonlinear second order systems, by Papageorgiou-Vetro-Vetro \cite{Ref13}.
\end{rem}

\end{document}